\theoremstyle{plain}
\newtheorem{theorem}                 {Theorem}      [section]
\newtheorem{proposition}  [theorem]  {Proposition}
\theoremstyle{definition}
\newtheorem{example}      [theorem]  {Example}
\newtheorem{remark}       [theorem]  {Remark}
\newtheorem{definition}   [theorem]  {Definition}
\numberwithin{equation}{section}
\def \theo-intro#1#2 {\vskip .25cm\noindent{\bf Theorem #1\ }{\it #2}}
\def \rn{\mathbb R}
\def \F{\mathcal F}
\def \H{\mathcal H}
\def \V{\mathcal V}
\def \sol{\mathfrak{sol}}
\def \ip #1#2{\langle #1,#2 \rangle}
\def \lb#1#2{[#1,#2]}
\def \g{\mathfrak{g}}
\def \h{\mathfrak{h}}
\def \k{\mathfrak{k}}
\def \r{\mathfrak{r}}
\DeclareMathOperator{\ad}{ad}
\def \SLR#1{\text{\bf SL}_{#1}(\rn)}
\def \slr#1{\mathfrak{sl}_{#1}(\rn)}
\def \SO#1{\text{\bf SO}(#1)}
\def \so#1{\mathfrak{so}(#1)}
\def \SU#1{\text{\bf SU}(#1)}
\def \su#1{\mathfrak{su}(#1)}
\def \co#1{\mathfrak{co}(#1)}
\def \nab#1#2{\hbox{$\nabla$\kern -.3em\lower 1.0 ex
    \hbox{$#1$}\kern -.1 em {$#2$}}}
\begin{document}
\baselineskip 22pt \larger

\allowdisplaybreaks

\title{Harmonic morphisms from\\ five-dimensional Lie groups}

\author{Sigmundur Gudmundsson}

\keywords{harmonic morphisms, minimal submanifolds, Lie groups}

\subjclass[2010]{58E20, 53C43, 53C12}

\address
{Department of Mathematics, Faculty of Science, Lund University,
Box 118, S-221 00 Lund, Sweden}
\email{Sigmundur.Gudmundsson@math.lu.se}

\begin{abstract}
We consider five-dimensional Lie groups equipped with a left-invariant
Riemannian metric. On such groups we construct left-invariant
conformal foliations with minimal leaves of codimension 2.
These foliations produce complex-valued harmonic morphisms
locally defined on the Lie group.
\end{abstract}

\maketitle

\section{Introduction}

Harmonic morphisms $\phi:(M,g)\to(N,h)$ between Riemannian manifolds are maps
which preserve Laplace's equation.  They are
solutions to over-determined non-linear systems of partial differential
equations determined by the geometric data of the manifolds involved.
For this reason, harmonic morphisms are difficult to find and have no
general existence theory, not even locally.  It is therefore important
to develop tools for manufacturing examples and thereby to prove existence
in special cases.

We are interested in complex-valued harmonic morphisms from Riemannian
homogeneous spaces.  It is well-known that any symmetric space,
which is neither $G_2/\SO 4$ nor its non-compact dual, carries local
harmonic morphisms and even global solutions exist if the
space is of non-compact type, see \cite{Gud-Sve-4}.  Here we focus
our attention on Riemannian Lie groups as homogeneous spaces.

In \cite{Gud-Sve-5}, \cite{Gud-Nor-1} and \cite{Gud-Sve-6} the authors
introduce a method for constructing left-invariant conformal foliations
with minimal leaves of codimension 2.  These produce local complex-valued
harmonic morphisms on the Lie groups involved.  This technique gives an
extensive collection of new solutions to our non-linear problem in
dimensions 3 and 4.  It even provides a classification of those Riemannian
Lie groups carrying such foliation in these cases.

In this paper, we consider 5-dimensional Lie groups $G$.  The constructed
foliations $\F$ are 3-dimensional with an integrable tangent distribution $\V$.
Since this is left-invariant it is induced by a 3-dimensional subalgebra $\k$
of the Lie algebra $\g$ of $G$. L. Bianchi classified the $3$-dimensional
real Lie algebras up to isomorphy.  They fall into nine disjoint types I-IX.
Bianchi's list can be found in Appendix \ref{section-Bianchi}.

\section{Harmonic morphisms and minimal conformal foliations}

Let $M$ and $N$ be two manifolds of dimensions $m$ and $n$,
respectively. A Riemannian metric $g$ on $M$ gives rise to the
notion of a {\it Laplacian} on $(M,g)$ and real-valued {\it
harmonic functions} $f:(M,g)\to\rn$. This can be generalized to
the concept of {\it harmonic maps} $\phi:(M,g)\to (N,h)$ between
Riemannian manifolds, which are solutions to a semi-linear system
of partial differential equations, see \cite{Bai-Woo-book}.

\begin{definition}
  A map $\phi:(M,g)\to (N,h)$ between Riemannian manifolds is
  called a {\it harmonic morphism} if, for any harmonic function
  $f:U\to\rn$ defined on an open subset $U$ of $N$ with $\phi^{-1}(U)$
non-empty,
  $f\circ\phi:\phi^{-1}(U)\to\rn$ is a harmonic function.
\end{definition}

The following characterization of harmonic morphisms between
Riemannian manifolds is due to Fuglede and T. Ishihara.  For the
definition of horizontal (weak) conformality (alt. semiconformality)
we refer to \cite{Bai-Woo-book}.

\begin{theorem}\cite{Fug-1,T-Ish}
  A map $\phi:(M,g)\to (N,h)$ between Riemannian manifolds is a
  harmonic morphism if and only if it is a horizontally (weakly)
  conformal harmonic map.
\end{theorem}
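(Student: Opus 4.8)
The plan is to derive both implications from the fundamental composition law for the tension field (see \cite{Bai-Woo-book}): for a smooth map $\phi:(M,g)\to(N,h)$ and a function $f$ defined on an open subset of $N$,
\[
\Delta_M(f\circ\phi)=df(\tau(\phi))+\operatorname{trace}_g\,\nabla df(d\phi,d\phi),
\]
where $\tau(\phi)$ is the tension field of $\phi$ and $\nabla df=\operatorname{Hess}f$. If $\{e_i\}$ is a local orthonormal frame on $M$, the last term equals $\sum_i(\operatorname{Hess}f)(d\phi(e_i),d\phi(e_i))$, which at a point $p$ depends only on the symmetric $2$-tensor $A_p\in\operatorname{Sym}^2 T_{\phi(p)}N$ with components $A^{\alpha\beta}=g^{ij}\,\partial_i\phi^\alpha\,\partial_j\phi^\beta$; recall that $\phi$ is horizontally (weakly) conformal at $p$ exactly when $A_p=\lambda(p)\,h^{-1}$ for some $\lambda(p)\ge 0$, equivalently $d\phi_p\circ d\phi_p^{*}=\lambda(p)\,\mathrm{Id}$. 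This makes one implication immediate: if $\phi$ is harmonic and horizontally conformal, then for any harmonic $f$ the formula gives $\Delta_M(f\circ\phi)=0+\lambda\,\operatorname{trace}_h\operatorname{Hess}f=\lambda\,\Delta_N f=0$, so $f\circ\phi$ is harmonic and $\phi$ is a harmonic morphism.

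For the converse, fix $p\in M$ and put $q=\phi(p)$; the idea is to extract pointwise conditions at $p$ by varying $f$ among functions harmonic near $q$. The essential input is a jet-realisation statement: for every $\xi\in T_q^{*}N$ and every symmetric bilinear form $b$ on $T_qN$ with $\operatorname{trace}_h b=0$ there are harmonic functions near $q$ whose $1$-jet and trace-free $2$-jet at $q$ approximate $(\xi,b)$ arbitrarily well. Applying the composition law first to harmonic functions with $df_q=0$, the harmonic-morphism condition $\Delta_M(f\circ\phi)(p)=0$ forces the duality pairing $\langle A_p,b\rangle$ to vanish for every trace-free symmetric $b$; since the trace-free forms constitute a hyperplane in $\operatorname{Sym}^2 T_q^{*}N$ whose annihilator in $\operatorname{Sym}^2 T_qN$ is spanned by $h^{-1}$, this gives $A_p=\lambda(p)\,h^{-1}$, and $\lambda(p)\ge 0$ because $A_p$ is positive semi-definite (it is $\xi\mapsto|d\phi_p^{*}\xi|_g^2$ in disguise). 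Hence $\phi$ is horizontally conformal at every $p$. Substituting $A=\lambda h^{-1}$ back into the composition law, for any harmonic $f$ one gets $0=\Delta_M(f\circ\phi)(p)=df_q(\tau(\phi)(p))+\lambda(p)\,\Delta_N f(q)=df_q(\tau(\phi)(p))$; choosing harmonic functions whose differentials at $q$ span $T_q^{*}N$ yields $\tau(\phi)(p)=0$, so $\phi$ is also harmonic.

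The hard part is the jet-realisation statement. I would prove it locally, in geodesic normal coordinates $(y^\alpha)$ centred at $q$, where $h_{\alpha\beta}(q)=\delta_{\alpha\beta}$, the Christoffel symbols vanish at $q$, and $\Delta_N=\sum_\alpha\partial_\alpha^2+(\text{higher-order corrections})$. The polynomial $\xi_\alpha y^\alpha+\tfrac12 b_{\alpha\beta}y^\alpha y^\beta$ has Laplacian vanishing at $q$ precisely because $\operatorname{trace}_h b=0$; correcting it successively by homogeneous polynomials of increasing degree --- using the classical fact that the flat Laplacian maps homogeneous polynomials of degree $k$ onto those of degree $k-2$ --- produces a formal power series solution of $\Delta_N u=0$ with the prescribed $2$-jet. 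Truncating at high order gives a polynomial $P$ with $\Delta_N P$ vanishing to high order at $q$; solving $\Delta_N v=-\Delta_N P$ on a small ball $B_\epsilon$ with zero boundary data and invoking the scaled Schauder estimate shows that $\|v\|_{C^2(B_\epsilon)}\to 0$ as $\epsilon\to 0$, so $P+v$ is a genuine harmonic function whose $2$-jet at $q$ differs from that of $P$ by an arbitrarily small error --- which is exactly what the argument above requires. Alternatively, one may simply invoke the classical fact, going back to \cite{Fug-1}, that near any point of $N$ the harmonic functions realise all $2$-jets subject only to the trace constraint. With this input in hand, the two implications above complete the proof.
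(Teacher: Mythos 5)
The paper does not prove this statement at all: it is quoted verbatim as the classical characterization of Fuglede and T.~Ishihara, with the proof delegated to \cite{Fug-1,T-Ish} and \cite{Bai-Woo-book}. So there is nothing in the paper to compare your argument against line by line; what you have written is, in substance, the standard proof from those sources, and it is correct. The composition law $\Delta_M(f\circ\phi)=df(\tau(\phi))+\operatorname{trace}_g\nabla df(d\phi,d\phi)$ is the right engine, the easy implication is handled exactly as it should be, and the converse correctly reduces to the jet-realisation lemma for harmonic functions: separating the first-order information (which kills $\tau(\phi)$ once horizontal conformality is known) from the trace-free second-order information (which forces $A_p=\lambda(p)h^{-1}$, with $\lambda\ge 0$ from positive semi-definiteness). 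Two small points deserve a little more care if this were to be written out in full: first, in the inductive construction of the formal power series solution you should note that $\Delta_N=\Delta_0+(\text{operators raising homogeneity degree})$ in normal coordinates, so that the degree-$(k-2)$ part of $\Delta_N u$ is $\Delta_0 u_k$ plus terms already determined by $u_j$, $j<k$ --- this is what makes the induction close; second, the scaled Schauder estimate on $B_\epsilon$ gives $C^2$-smallness of the correction $v$ at the centre only after choosing the truncation order $m$ large enough to beat the $\epsilon^{-2-\alpha}$ loss, which you should make explicit. Neither is a gap, only a place where the sketch compresses a genuine computation. Your argument is a valid self-contained proof of a result the paper merely imports.
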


Let $(M,g)$ be a Riemannian manifold, $\V$ be an integrable
distribution on $M$ and denote by $\H$ its orthogonal
complement distribution on $M$.
As customary, we also use $\V$ and $\H$ to denote the
orthogonal projections onto the corresponding subbundles of $TM$
and denote by $\F$ the foliation tangent to
$\V$. The second fundamental form for $\V$ is given by
$$B^\V(U,V)=\frac 12\H(\nabla_UV+\nabla_VU)\qquad(U,V\in\V),$$
while the second fundamental form for $\H$ is given by
$$B^\H(X,Y)=\frac{1}{2}\V(\nabla_XY+\nabla_YX)\qquad(X,Y\in\H).$$
The foliation $\F$ tangent to $\V$ is said to be {\it conformal} if there is a
vector field $V\in \V$ such that $$B^\H=g\otimes V,$$ and
$\F$ is said to be {\it Riemannian} if $V=0$.
Furthermore, $\F$ is said to be {\it minimal} if $\text{trace}\ B^\V=0$ and
{\it totally geodesic} if $B^\V=0$. This is equivalent to the
leaves of $\F$ being minimal and totally geodesic submanifolds
of $M$, respectively.

It is easy to see that the fibres of a horizontally conformal
map (resp.\ Riemannian submersion) give rise to a conformal foliation
(resp.\ Riemannian foliation). Conversely, the leaves of any
conformal foliation (resp.\ Riemannian foliation) are
locally the fibres of a horizontally conformal map
(resp.\ Riemannian submersion), see \cite{Bai-Woo-book}.

The next result of Baird and Eells gives the theory of
harmonic morphisms, with values in a surface,
a strong geometric flavour.

\begin{theorem}\cite{Bai-Eel}\label{theo:B-E}
Let $\phi:(M^m,g)\to (N^2,h)$ be a horizontally conformal
submersion from a Riemannian manifold to a surface. Then $\phi$ is
harmonic if and only if $\phi$ has minimal fibres.
\end{theorem}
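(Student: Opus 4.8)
The plan is to compute the tension field $\tau(\phi)$ of $\phi$ — the trace of its second fundamental form $\beta=\nabla d\phi$ — and to show that when the target is a surface it collapses to a single term built from the mean curvature of the fibres.

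First I would fix a point $p\in M$ and pick a local orthonormal frame $\{X_1,\dots,X_m\}$ adapted to the splitting $TM=\H\oplus\V$, with $\{X_1,X_2\}$ spanning $\H$ and $\{X_3,\dots,X_m\}$ spanning $\V$, so that $\tau(\phi)=\sum_{i=1}^m\beta(X_i,X_i)$ splits into a vertical and a horizontal sum. For the vertical sum, horizontal conformality forces $d\phi$ to vanish on $\V$, so for a vertical field $V$ we get $\beta(V,V)=-d\phi(\nabla_VV)=-d\phi\bigl(\H(\nabla_VV)\bigr)=-d\phi\bigl(B^\V(V,V)\bigr)$; summing over $X_3,\dots,X_m$ yields the contribution $-d\phi(\trace B^\V)$, where $\trace B^\V$ is the (horizontal) mean curvature vector field of the foliation $\F$.

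For the horizontal sum I would use the conformality relation $h(d\phi X,d\phi Y)=\lambda^2\,g(X,Y)$ for $X,Y\in\H$, where the dilation $\lambda$ is everywhere positive since $\phi$ is submersive, so that $\{d\phi(X_1)/\lambda,\,d\phi(X_2)/\lambda\}$ is a local orthonormal frame on $N$ along $\phi$. Expanding $\sum_{i=1}^2\beta(X_i,X_i)$ by the Koszul formula on both $(M,g)$ and $(N,h)$ together with this relation, one reaches the classical identity (see \cite{Bai-Woo-book}) which, for an $n$-dimensional target, reads
$$\sum_{i=1}^n\beta(X_i,X_i)=\frac{n-2}{2}\,d\phi\bigl(\gradh{\ln\lambda^2}\bigr).$$
The essential point is that its coefficient is a multiple of $n-2$: for $n=2$ this horizontal contribution vanishes identically and all dependence on the dilation drops out.

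Combining the two pieces gives, in the case $n=2$,
$$\tau(\phi)=-d\phi(\trace B^\V),$$
so $\phi$ is harmonic precisely when $d\phi(\trace B^\V)=0$. Since $\phi$ is a submersion, $d\phi$ restricts to a linear isomorphism on each horizontal space, and $\trace B^\V$ is horizontal; hence this holds if and only if $\trace B^\V=0$, which by the discussion preceding the theorem says exactly that the fibres of $\phi$ are minimal submanifolds of $M$. I expect the one genuinely technical step to be the horizontal computation behind the displayed identity, since it requires keeping careful track of how the Levi-Civita connection of $(N,h)$ pulls back along $\phi$ and interacts with the first derivatives of $\lambda$; everything else is routine bookkeeping with adapted frames. (Alternatively one could work locally in isothermal coordinates on $N$ and invoke the conformal invariance of harmonicity in dimension two, but the frame computation gives the precise formula most directly.)
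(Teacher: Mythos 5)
Your argument is correct and is essentially the standard proof of this result: the paper itself only quotes the theorem from Baird and Eells without giving a proof, and your decomposition of the tension field into the vertical contribution $-d\phi(\trace B^\V)$ plus a horizontal contribution proportional to $(n-2)\,d\phi(\gradh{\ln\lambda})$ is exactly the fundamental equation for horizontally conformal maps on which the cited proof (see also \cite{Bai-Woo-book}) rests. The only cosmetic slips are the summation index in the displayed identity (you sum $i=1$ to $2$ on the left while stating it for a general $n$-dimensional target) and the sign convention in the horizontal term, neither of which affects the conclusion since that coefficient vanishes precisely when $n=2$.
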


For the general theory of harmonic morphisms between Riemannian
manifolds we refer to the excellent book \cite{Bai-Woo-book}
and the regularly updated on-line bibliography \cite{Gud-bib}.

\section{Conformal foliations of codimension 2}
\label{Conformal foliations of codimension 2}

Let $G$ be a Riemannian Lie group, $K$ be a subgroup of codimension 2
and $\g$,  $\k$ be their Lie algebras, respectively.  Further assume that the
foliation generated be the integrable distribution $\V\cong\k$ is conformal.
This means that for any $X\in\V$ the adjoint action of $X$ on the
2-dimensional horizontal subspace $\H$ is conformal i.e. there exists
a $\rho\in\rn$ such that
$$\ip{\ad_XA}{B}+\ip{A}{\ad_XB}=\rho\cdot\ip{A}{B}\qquad(A,B\in\H),$$
or put differently, $\H\ad_X\big\vert_\H:\H\to\H$ is in the conformal algebra
$$\co{\H}=\rn\cdot\mathrm{Id}_\H\oplus\so{\H}$$ acting on $\H$.
It follows from the Jacobi identity and the fact that $\V$
is integrable that this is indeed a Lie algebra representation of
$\k$ on $\H$ i.e. $$\H\ad_{[X,Y]}\big\vert_\H
=[\H\ad_X\big\vert_\H,\H\ad_Y\big\vert_\H]\qquad(X,Y\in\V).$$
Here the bracket on the right-hand side is just the usual
bracket on the space of endomorphisms on $\H$. Since
$\co{\H}$ is abelian, we see from this formula,
that the adjoint action of $[\V,\V]$ has no $\H$-component.
These arguments were already utilized in the 4-dimensional case,
see \cite{Gud-Sve-6}. Example \ref{exam-counter}
shows that they are not valid when the codimension is greater that 2.

\begin{remark}
It should be noted that if the operator $\H\ad_x|_{\H}:\H\to\H$
is skew-symmetric for $X\in\V$ then the constant $\rho\in\rn$ is zero.
This applies, for example, in the important case when $G$ is a compact semisimple
Lie group and the metric is a negative multiple of the corresponding
biinvariant Killing form.
\end{remark}

\begin{example}\label{exam-counter}
Let $G$ be a 6-dimensional Riemannian Lie group such that
$$\{X,Y,Z,A,B,C\}$$ is an orthonormal basis for the Lie algebra
$\g$ with the following bracket relations
$$\lb XY =Z,\ \
\lb ZX =Y,\ \
\lb YZ =X,$$
$$\lb XA =-C-X-Z,\ \
\lb XB =-Y-Z,\ \
\lb XC =A+X+Y,$$
$$\lb YA =Z,\ \
\lb YB =C+2X+Z,\ \
\lb YC =-B-X-Y+2Z,\ \ $$
$$\lb ZA =-B-2Y+Z,\ \
\lb ZB =A+2X,\ \
\lb ZC =-Y,$$
$$\lb BA =-B+2C+2X-2Y+Z,\ \
\lb CA = A-B+C+X+2Z,$$
$$\lb CB = A-B+Y+2Z.$$
Then the corresponding foliation generated by $X,Y,Z\in\g$
is conformal with minimal leaves which are not totally geodesic.
The adjoint action of the derived algebra $[\su 2,\su 2]=\su 2$
on the horizontal distribution $\H$ has a non-trivial $\H$-component.
This example was obtained by assuming that $\k=\su 2$ and then
playing with the corresponding Jacobi conditions.
\end{example}

\begin{proposition}
Let $G$ be a Riemannian Lie group, $K$ be a subgroup of
codimension 2 and $\g$, $\k$ be their Lie algebras,
respectively.  Further let the foliation generated by the
integrable distribution $\V\cong\k$ be conformal.  If $K$ is
semisimple then the corresponding foliation is Riemannian.
\end{proposition}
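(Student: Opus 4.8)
The plan is to show that the conformal factor $\rho$ in the defining relation
$$\ip{\ad_XA}{B}+\ip{A}{\ad_XB}=\rho(X)\cdot\ip{A}{B}\qquad(A,B\in\H)$$
vanishes identically for $X\in\k$, since $V=0$ in $B^\H=g\otimes V$ is exactly the statement that the foliation is Riemannian. First I would observe that $\rho:\k\to\rn$ is a \emph{linear functional}: it depends linearly on $X$ by the linearity of $\ad$ and of the inner product. So the whole claim reduces to proving that the linear map $\rho$ is the zero map on $\k$.

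Next I would use the remark made just before the proposition: because $\co\H$ is abelian and $X\mapsto\H\ad_X|_\H$ is a Lie algebra homomorphism $\k\to\co\H$, the adjoint action of $[\k,\k]$ has no $\H$-component, hence $\rho$ kills $[\k,\k]$. But $\k$ is semisimple, so $\k=[\k,\k]$, and therefore $\rho\equiv 0$ on all of $\k$. This immediately gives $V=0$ and the foliation is Riemannian.

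The genuine content — and the step I expect to be the main obstacle to state cleanly — is making precise the link between the left-invariant quantity $\rho(X)$ appearing in the conformality condition on $\g$ and the vector field $V$ appearing in $B^\H=g\otimes V$ on $G$. One must check that left-invariance reduces $V$ to an element of $\V$ (equivalently, a functional on $\H^{*}$, or after identification a functional on $\k$ via the formula above), and that "$\F$ Riemannian" ($V=0$) corresponds precisely to "$\rho=0$". This is essentially a bookkeeping translation between the extrinsic formulation in Section 2 and the Lie-algebraic reformulation at the start of Section 3, using $\nabla_XY$ for left-invariant $X,Y$ expressed via Koszul's formula, together with the fact that $B^\H(A,B)=\tfrac12\V(\nabla_AB+\nabla_BA)$ pairs against $X\in\V$ to give $-\tfrac12\big(\ip{\ad_XA}{B}+\ip{A}{\ad_XB}\big)$. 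Once this identification is in place, the semisimplicity argument $\k=[\k,\k]\Rightarrow\rho=0$ finishes the proof in one line; the classification of $\k$ as $\su2$ or $\slr2$ is not needed here, only that a $3$-dimensional semisimple Lie algebra is perfect.
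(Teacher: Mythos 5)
Your proposal is correct and follows essentially the same route as the paper: the paper's one-line proof likewise rests on the preceding observation that $X\mapsto\H\ad_X\big\vert_\H$ is a representation into the abelian algebra $\co{\H}$, so the adjoint action of $[\k,\k]$ has no $\H$-component, and semisimplicity ($\k=[\k,\k]$) forces the conformal factor to vanish. Your extra care about translating between $V$ in $B^\H=g\otimes V$ and the Lie-algebraic conformal factor $\rho$ via the Koszul formula is exactly the bookkeeping the paper leaves implicit (up to an immaterial sign convention).
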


\begin{proof}  The result is a direct consequence of $[\k,\k]=\k$
and the fact that the adjoint action of [V,V] has no H-component.
\end{proof}

From now on we shall assume that $G$ is a 5-dimensional Riemannian
Lie group $G$ and that $\{X,Y,Z,A,B\}$ is an orthonormal basis
for its Lie algebra $\g$.  Further let $X,Y,Z\in\g$ generate a
3-dimensional left-invariant integrable distribution $\V$ on $G$
which is conformal and with minimal leaves of codimension 2.
Let $\H=\V^\perp$ be the horizontal distribution generated by $A,B\in\g$.
Then it is easily seen that $A,B$ can be chosen in such a way
that the Lie bracket relations for $\g$ are of the following form
\begin{eqnarray*}
\lb XY &=& c_1X+c_2Y+c_3Z,\\
\lb ZX &=& c_4X+c_5Y+c_6Z,\\
\lb YZ &=& c_7X+c_8Y+c_9Z,
\end{eqnarray*}
\begin{eqnarray*}
\lb XA &=&  a_1 A+b_1 B+x_1 X+y_1 Y+z_1 Z,\\
\lb XB &=& -b_1 A+a_1 B+x_2 X+y_2 Y+z_2 Z,\\
\lb YA &=&  a_2 A+b_2 B+x_3 X+y_3 Y+z_3 Z,\\
\lb YB &=& -b_2 A+a_2 B+x_4 X+y_4 Y+z_4 Z,\\
\lb ZA &=&  a_3 A+b_3 B+x_5 X+y_5 Y-(x_1+y_3) Z,\\
\lb ZB &=& -b_3 A+a_3 B+x_6 X+y_6 Y-(x_2+y_4) Z,\\
\lb AB &=& r A+\theta_1 X+\theta_2 Y+\theta_3 Z.
\end{eqnarray*}
For later reference we state the following easy result describing
the geometry of the situation.

\begin{proposition}\label{prop-geometry}
Let $G$ be a 5-dimensional Lie group and $\{X,Y,Z,A,B\}$ be an
orthonormal basis for its Lie algebra $\g$ as above.  Then
\begin{enumerate}
\item[(i)] $\V$ is {\it Riemannian} if and only if $a=(a_1,a_2,a_3)=0$,
\item[(ii)] $\H$ is {\it integrable} if and only if
$\theta=(\theta_1,\theta_2,\theta_3)=0$, and
\item[(iii)] $\V$ is {\it totally geodesic} if and only if
$$\Lambda(x,y,z)=
\begin{pmatrix}
x_1 & y_3 & x_3+y_1 & x_5+z_1 & y_5+z_3\\
x_2 & y_4 & x_4+y_2 & x_6+z_2 & y_6+z_4
\end{pmatrix}=0.
$$
\end{enumerate}
\end{proposition}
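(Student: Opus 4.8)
The plan is to unwind each of the three geometric conditions directly from the definitions in Section 2, using the explicit bracket relations for $\g$ together with the Koszul formula for the Levi-Civita connection of a left-invariant metric. Since $\{X,Y,Z,A,B\}$ is an orthonormal basis, for any left-invariant vector fields $U,V,W$ one has $2\ip{\nabla_UV}{W}=\ip{[U,V]}{W}-\ip{[V,W]}{U}+\ip{[W,U]}{V}$, so every connection coefficient is a linear combination of structure constants. The strategy is then to compute the relevant projections ($\H$-component of $\nabla_UV+\nabla_VU$ for $U,V\in\V$, and the $\V$-component of $\nabla_AB+\nabla_BA$) and read off when they vanish.

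For part (i), $\V$ is Riemannian precisely when the conformality vector field $V$ in $B^\H=g\otimes V$ is zero; equivalently $\ip{B^\H(A,A)}{X}=\ip{B^\H(B,B)}{X}=0$ and the same with $Y,Z$. Using the Koszul formula, $\ip{\nabla_AA}{X}=-\ip{[A,X]}{A}=-a_1$ from the relation $[XA]=a_1A+b_1B+\cdots$, and similarly the $Y$- and $Z$-components of $B^\H(A,A)$ and $B^\H(B,B)$ are $-a_2$ and $-a_3$; the off-diagonal term $B^\H(A,B)$ contributes nothing new. So the conformality field is $V=-(a_1X+a_2Y+a_3Z)$ up to normalization, and $\V$ is Riemannian iff $a=0$.

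For part (ii), $\H=\mathrm{span}\{A,B\}$ is integrable iff $[A,B]\in\H$, i.e. iff the $X,Y,Z$-components of $[A,B]=rA+\theta_1X+\theta_2Y+\theta_3Z$ vanish, which is exactly $\theta=0$. For part (iii), $\V$ is totally geodesic iff $B^\V=0$, i.e. $\H(\nabla_UV)=0$ for all $U,V\in\{X,Y,Z\}$; here I need the six independent symmetric pairs $(X,X),(Y,Y),(Z,Z),(X,Y),(X,Z),(Y,Z)$, and for each the $A$- and $B$-components. Each such component is again a single structure constant or sum of two via Koszul: e.g. $\ip{\nabla_XX}{A}=-\ip{[X,A]}{X}=-x_1$ and $\ip{\nabla_XX}{B}=-x_2$, while $2\ip{\nabla_XY}{A}=-\ip{[Y,A]}{X}+\ip{[A,X]}{Y}=-x_3-y_1$, and so on. Assembling these ten scalars into the $2\times5$ matrix $\Lambda(x,y,z)$ — with the substitutions $z_5=-(x_1+y_3)$, $z_6=-(x_2+y_4)$ already built into the $[ZA],[ZB]$ relations so that the $(Z,Z)$-column reads $(x_1,x_2)$ up to sign — shows $B^\V=0$ iff $\Lambda(x,y,z)=0$.

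The computations are entirely routine; the only mild subtlety, and the point worth stating carefully, is the bookkeeping in part (iii): one must track which structure constants appear in which entry of $\Lambda$ and verify that the two rows genuinely correspond to the $A$- and $B$-components (so that the matrix having rank zero, not merely rank $\le1$, is the right condition), and one must use that the $Z$-coefficients of $[ZA]$ and $[ZB]$ have already been eliminated using the minimal-leaves hypothesis $\trace B^\V=0$, which is what makes the $(Z,Z)$-entries collapse to $\pm(x_1,x_2)$ rather than introducing new constants. Beyond that, parts (i) and (ii) are immediate once the Koszul formula is written down.
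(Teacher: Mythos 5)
Your proposal is correct and is exactly the routine Koszul-formula computation that the paper has in mind when it states the proposition without proof as an ``easy result'': part (ii) is immediate from Frobenius, and parts (i) and (iii) follow by reading off the $\V$- and $\H$-components of the symmetrized connection terms from the structure constants, with the minimality normalization $\ip{[Z,A]}{Z}=-(x_1+y_3)$, $\ip{[Z,B]}{Z}=-(x_2+y_4)$ making the $(Z,Z)$-entries redundant. The only blemishes are trivial bookkeeping slips that do not affect any of the equivalences: the Koszul formula actually gives $\ip{\nabla_AA}{X}=+a_1$ rather than $-a_1$, and the $(Z,Z)$-component of $B^\V$ is $x_1+y_3$ (resp.\ $x_2+y_4$), i.e.\ the sum of the first two columns of $\Lambda$, rather than $\pm x_1$.
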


\section{The Lie algebra $\su 2$ of type IX}

The special unitary group $\SU 2$ is diffeomorphic to the
standard three dimensional sphere $S^3$.  Its Lie algebra is generated by
$$
e_1=\begin{pmatrix}0 & -1\\ 1 &  0\end{pmatrix}, \ \
e_2=\begin{pmatrix}i &  0\\ 0 & -i\end{pmatrix}, \ \
e_3=\begin{pmatrix}0 &  i\\ i &  0\end{pmatrix}.
$$
As a compact Lie group $\SU 2$ carries, amongst others, the well-known
one dimensional family of biinvariant Berger metrics $g_\lambda$
with $\lambda\in\rn^+$.
In terms of the orthonormal basis $\{X=e_1/\lambda,Y=e_2,Z=e_3\}$,
with respect to $g_\lambda$, the Lie brackets are then given by
$$\lb XY=2\lambda Z,\ \ \lb ZX=2\lambda Y,\ \ \lb YZ=2X/\lambda.$$
We extend this to an orthonormal basis $\{X,Y,Z,A,B\}$ for the Lie
algebra $\g$ of a 5-dimensional Lie group $G$, carrying a left-invariant,
minimal and conformal integrable distribution generated by $\su 2$.
It follows from our discussion in Section
\ref{Conformal foliations of codimension 2}, that the adjoint action of
$\su 2=[\su 2,\su 2]$, on the horizontal $\H$, has no $\H$-component.
This means that the additional Lie bracket relations for $\g$ take the form
\begin{eqnarray*}
\lb XA &=& x_1 X+y_1 Y+z_1 Z,\\
\lb XB &=& x_2 X+y_2 Y+z_2 Z,\\
\lb YA &=& x_3 X+y_3 Y+z_3 Z,\\
\lb YB &=& x_4 X+y_4 Y+z_4 Z,\\
\lb ZA &=& x_5 X+y_5 Y-(x_1+y_3) Z,\\
\lb ZB &=& x_6 X+y_6 Y-(x_2+y_4) Z,\\
\lb AB &=& r A+\theta_1 X+\theta_2 Y+\theta_3 Z.
\end{eqnarray*}
The following result tells us that for each $\lambda\in\rn^+$ we obtain
a 7-dimensional family of solutions to our problem.

\begin{theorem}\label{theo-SU2}
Let $G$ be a 5-dimensional Riemannian Lie group carrying a left-invariant,
minimal and conformal distribution $\V$, generated by the Riemannian
subalgebra $\su 2$ of $\g$ as above.  Then the Lie bracket relations
are of the form
$$\lb XY=2\lambda Z,\ \ \lb ZX=2\lambda Y,\ \ \lb YZ=2X/\lambda.$$
\begin{eqnarray*}
\lb XA &=& -\lambda^2x_3 Y-\lambda^2x_5 Z,\\
\lb XB &=& -\lambda^2x_4 Y-\lambda^2x_6 Z,\\
\lb YA &=& x_3 X+z_3 Z,\\
\lb YB &=& x_4 X+z_4 Z,\\
\lb ZA &=& x_5 X-z_3 Y,\\
\lb ZB &=& x_6 X-z_4 Y,\\
\lb AB &=& r A+\theta_1 X+\theta_2 Y+\theta_3 Z,
\end{eqnarray*}
where $\theta_1,\theta_2,\theta_3$ are given by
\begin{equation*}
\begin{pmatrix}
\theta_1  \\
\theta_2 \\
\theta_3
\end{pmatrix}
=\frac 12
\begin{pmatrix}
rz_3/\lambda+\lambda(x_3x_6-x_4x_5) \\
\lambda(rx_5-x_3z_4+x_4z_3) \\
-\lambda(rx_3+z_4x_5-z_3x_6)
\end{pmatrix}.
\end{equation*}
The corresponding foliation $\F$ is Riemannian. It is totally geodesic
if and only if $\lambda=1$ i.e the leaves are 3-dimensional round spheres.
\end{theorem}

\begin{proof}
It is easily checked that the Jacobi relations for $\g$ are equivalent
to the following system of equations
$$\Omega(\lambda,x,y,z)=
\begin{pmatrix}
x_1 & y_3 & y_1+\lambda^2x_3 & z_1+\lambda^2x_5 & y_5+z_3\\
x_2 & y_4 & y_2+\lambda^2x_4 & z_2+\lambda^2x_6 & y_6+z_4
\end{pmatrix}=0.
$$
This implies that the Lie bracket relations take the stated form.
It follows from Proposition \ref{prop-geometry} that the foliation
$\F$ is Riemannian.  The same result shows that $\F$ is totally
geodesic if and only if $\lambda=1$.
\end{proof}

To the structure of the Lie algebra $\g$ in Theorem
\ref{theo-SU2} we can say the following. In the generic case,
we have a Levi decomposition $\g=\k\oplus\r$ with the radical $\r$
generated by
$$R_1=X+\frac{\lambda^2(x_3x_6-x_4x_5)}{x_5z_4-x_6z_3}Z
-\frac{2\lambda x_6}{x_5z_4-x_6z_3}A+\frac{2\lambda x_5}{x_5z_4-x_6z_3}B$$
and
$$R_2=Y-\frac{x_3z_4-x_4z_3}{x_5z_4-x_6z_3}Z
+\frac{2z_4}{\lambda(x_5z_4-x_6z_3)}A-\frac{2z_3}{\lambda(x_5z_4-x_6z_3)}B.$$
In the light of Proposition 5.1 of \cite{Gud-Sve-4}, it is
interesting to notice that in the generic case the dimension of the quotient
algebra $\g/[\g,\g]$ is clearly 1 if and only if $r\neq 0$.

\section{The Lie algebra $\slr 2$ of type VIII}

The special linear group $\SLR 2$ is the non-compact companion of
$\SU 2$.  Its Lie algebra $\slr 2$ is generated by
$$
e_1=\begin{pmatrix}0 & -1\\ 1 &  0\end{pmatrix}, \ \
e_2=\begin{pmatrix}1 &  0\\ 0 & -1\end{pmatrix}, \ \
e_3=\begin{pmatrix}0 &  1\\ 1 &  0\end{pmatrix}.
$$
As a Lie group it carries, amongst others, a one dimensional family
of left-invariant metrics $g_\lambda$ with $\lambda\in\rn^+$.
In terms of the orthonormal basis $$\{X=e_1/\lambda,Y=e_2,Z=e_3\}$$
with respect to $g_\lambda$ the Lie brackets are given by
$$\lb XY=2\lambda Z,\ \ \lb ZX=2\lambda Y,\ \ \lb YZ=-2X/\lambda.$$
We extend this to an orthonormal basis $\{X,Y,Z,A,B\}$ for the Lie
algebra $\g$ of a 5-dimensional Lie group $G$, carrying a left-invariant,
minimal and conformal integrable distribution generated by $\slr 2$.
It follows from our discussion in Section
\ref{Conformal foliations of codimension 2}, that the adjoint action of
$\slr 2=[\slr 2,\slr 2]$, on the horizontal $\H$, has no $\H$-component.
This means that the additional Lie bracket relations for $\g$ take the form
\begin{eqnarray*}
\lb XA &=& x_1 X+y_1 Y+z_1 Z,\\
\lb XB &=& x_2 X+y_2 Y+z_2 Z,\\
\lb YA &=& x_3 X+y_3 Y+z_3 Z,\\
\lb YB &=& x_4 X+y_4 Y+z_4 Z,\\
\lb ZA &=& x_5 X+y_5 Y-(x_1+y_3) Z,\\
\lb ZB &=& x_6 X+y_6 Y-(x_2+y_4) Z,\\
\lb AB &=& r A+\theta_1 X+\theta_2 Y+\theta_3 Z.
\end{eqnarray*}
As in the case of $\SU 2$, we get for each $\lambda\in\rn^+$
a 7-dimensional family of solutions.

\begin{theorem}\label{theo-SL2}
Let $G$ be a 5-dimensional Riemannian Lie group carrying a left-invariant,
minimal and conformal distribution $\V$, generated by the Riemannian
subalgebra $\slr 2$ of $\g$ as above.  Then the Lie bracket relations
are of the form
$$\lb XY=2\lambda Z,\ \ \lb ZX=2\lambda Y,\ \ \lb YZ=-2X/\lambda.$$
\begin{eqnarray*}
\lb XA &=& \lambda^2x_3 Y+\lambda^2x_5 Z,\\
\lb XB &=& \lambda^2x_4 Y+\lambda^2x_6 Z,\\
\lb YA &=& x_3 X+z_3 Z,\\
\lb YB &=& x_4 X+z_4 Z,\\
\lb ZA &=& x_5 X-z_3 Y,\\
\lb ZB &=& x_6 X-z_4 Y,\\
\lb AB &=& r A+\theta_1 X+\theta_2 Y+\theta_3 Z,
\end{eqnarray*}
where $\theta_1,\theta_2,\theta_3$ are given by
\begin{equation*}
\begin{pmatrix}
\theta_1  \\
\theta_2 \\
\theta_3
\end{pmatrix}
=\frac 12
\begin{pmatrix}
rz_3/\lambda-\lambda(x_3x_6-x_4x_5) \\
-\lambda(rx_5-x_3z_4+x_4z_3) \\
\lambda(rx_3+z_4x_5-z_3x_6)
\end{pmatrix}.
\end{equation*}
The corresponding foliation $\F$ is Riemannian but not totally geodesic
for any values of $\lambda\in\rn^+$.
\end{theorem}

\begin{proof}
It is easily checked that the Jacobi relations for $\g$ are equivalent
to the following system of equations
$$\Omega(\lambda,x,y,z)=
\begin{pmatrix}
x_1 & y_3 & y_1-\lambda^2x_3 & z_1-\lambda^2x_5 & y_5+z_3\\
x_2 & y_4 & y_2-\lambda^2x_4 & z_2-\lambda^2x_6 & y_6+z_4
\end{pmatrix}=0.
$$
This implies that the Lie bracket relations take the stated form.
It follows from Proposition \ref{prop-geometry} that the foliation
$\F$ is Riemannian.  The same result shows that $\F$ is not totally
geodesic since $\lambda^2+1=0$ has no real solution.
\end{proof}

\section{The derived algebra $[\k,\k]$ is 2-dimensional}

Let $G$ be a 5-dimensional Riemannian Lie group carrying a left-invariant
minimal and conformal integrable distribution $\V\cong\k$.  Let $\{X,Y,Z,A,B\}$
and $\{X,Y,Z\}$ be orthonormal bases for the Lie algebras $\g$ and $\k$,
respectively, such that
$$[Z,X]=\alpha X+\beta Y\ \ \text{and}\ \ [Z,Y]=\gamma X+\delta Y,$$
where the real structure constants satisfy $\alpha \delta - \beta\gamma\neq 0$.
Then the derived algebra $[\k,\k]$ is 2-dimensional and the additional bracket
relations take the form
\begin{eqnarray*}
\lb XA &=& x_1 X+y_1 Y+z_1 Z,\\
\lb XB &=& x_2 X+y_2 Y+z_2 Z,\\
\lb YA &=& x_3 X+y_3 Y+z_3 Z,\\
\lb YB &=& x_4 X+y_4 Y+z_4 Z,\\
\lb ZA &=&  a_3 A+b_3 B+x_5 X+y_5 Y-(x_1+y_3) Z,\\
\lb ZB &=& -b_3 A+a_3 B+x_6 X+y_6 Y-(x_2+y_4) Z,\\
\lb AB &=& r A+\theta_1 X+\theta_2 Y+\theta_3 Z.
\end{eqnarray*}
As a direct consequence of the two Jacobi equations
$$[[X,Y],A]+[[A,X],Y]+[[Y,A],X]=0$$
$$[[X,Y],B]+[[B,X],Y]+[[Y,B],X]=0$$
we see that, for this particular situation, we have $$z_1=z_2=z_3=z_4=0.$$

\section{The Lie algebras $\g_7(\alpha)$ of Type VII}

For $\alpha\in\rn$ we equip the 3-dimensional Lie algebra $\k=\g_7(\alpha)$
with its standard metric.  Then the bracket relations are given by
$$[Z,X]=\alpha X-Y,\quad [Z,Y]=X+\alpha Y,$$ where $\{X,Y,Z\}$ is
an orthonormal basis for $\k$.  Here the derived algebra
$[\k,\k]$ is 2-dimensional so the bracket relations for $\g$ simplify as
mentioned above.

\begin{theorem}\label{theo-g_7}
Let $G$ be a 5-dimensional Riemannian Lie group carrying a left-invariant,
minimal and conformal distribution $\V$, generated by the Riemannian
subalgebra $\g_7(\alpha)$ as above.  Then the Lie algebra $\g$ of G is
one of the following four given in Examples \ref{exam:7.1} - \ref{exam:7.4}.
\end{theorem}

The reader should note that the following Lie algebras are all
solvable. Further, in the generic case, they are neither nilpotent
nor are the corresponding foliations $\F$ totally geodesic.

\begin{example}\label{exam:7.1}
$$[Z,X]=\alpha X-Y,\ \ [Z,Y]=X+\alpha Y,$$
$$\lb ZA= a_3 A+b_3 B+x_5 X+y_5 Y,$$
$$\lb ZB= -b_3 A+a_3 B+x_6 X+y_6 Y.$$
\end{example}

\begin{example}
$$[Z,X]=\alpha X-Y,\ \ [Z,Y]=X+\alpha Y,$$
$$\lb XA= y_1 Y,\ \ \lb XB= y_2 Y,$$
$$\lb YA= -y_1 X,\ \ \lb YB= -y_2 X,$$
$$\lb ZA= x_5 X+y_5 Y,\ \ \lb ZB= x_6 X+y_6 Y,$$
$$\lb AB= \theta_1 X+\theta_2 Y$$
with
$$(1+\alpha^2)\theta_1={\alpha y_1y_6-\alpha y_2y_5+y_1x_6-x_5y_2},$$
$$(1+\alpha^2)\theta_2={\alpha x_5y_2-\alpha y_1x_6+y_1y_6-y_2y_5}.$$
\end{example}

\begin{example}
$$[Z,X]=\alpha X-Y,\ \ [Z,Y]=X+\alpha Y,$$
$$\lb XB=y_2 Y,\ \ \lb YB= -y_2 X,$$
$$\lb ZA=x_5 X+y_5 Y,\ \ \lb ZB= x_6 X+y_6 Y,$$
$$\lb AB=r A+\theta_1 X+\theta_2 Y$$
where
$$(1+\alpha^2)\theta_1 = -\alpha rx_5-\alpha y_2y_5+ry_5-x_5y_2,$$
$$(1+\alpha^2)\theta_2 = -\alpha ry_5+\alpha x_5y_2-rx_5-y_2y_5.$$
\end{example}

\begin{example}\label{exam:7.4}
$$[Z,X]=\alpha X-Y,\ \ [Z,Y]=X+\alpha Y,$$
$$\lb XB =-y_2(\alpha X-Y),\ \ \lb YB= -y_2(X+\alpha Y),$$
$$\lb ZA=-2\alpha A+x_5 X+y_5 Y,$$
$$\lb ZB=-2\alpha B+x_6 X+y_6 Y+2\alpha y_2 Z,$$
$$\lb AB=2\alpha y_2 A-x_5y_2 X-y_2y_5 Y.$$
\end{example}

\section{The Lie algebras $\mathfrak{sol}_\alpha^3$ of Type VI}

For $\alpha\in\rn^+$ we equip the 3-dimensional Lie algebra
$\k=\mathfrak{sol}_\alpha^3$ with its standard metric. Then the
bracket relations are given by $$[Z,X]=\alpha X,\ \ [Z,Y]=-Y,$$
where $\{X,Y,Z\}$ is an orthonormal basis for $\k$.

\begin{theorem}\label{theo-sol}
Let $G$ be a 5-dimensional Riemannian Lie group carrying a left-invariant,
minimal and conformal distribution $\V$, generated by the Riemannian
subalgebra $\mathfrak{sol}_\alpha^3$ as above.  Then the Lie algebra $\g$ of G is
one of the following ten given in Examples \ref{exam:8.1} - \ref{exam:8.10}.
\end{theorem}

The following Lie algebras are all solvable. Further, in the generic case,
they are neither nilpotent nor are the corresponding foliations $\F$ totally
geodesic.  In two of the cases the foliation is Riemannian.

\begin{example}\label{exam:8.1}
$$[Z,X]=\alpha X,\ \ [Z,Y]=-Y,$$
$$\lb XA = -y_3 X,\ \ \lb XB = -y_4 X,$$
$$\lb YA = y_3 Y,\ \ \lb YB = y_4 Y,$$
$$\lb ZA = x_5 X+y_5 Y,\ \ \lb ZB = x_6 X+y_6 Y,$$
$$\alpha\lb AB = (y_3x_6-y_4x_5) X+\alpha(y_3y_6-y_4y_5) Y,$$
\end{example}

\begin{example}
$$[Z,X]=\alpha X,\ \ [Z,Y]=-Y,$$
$$\lb XB = -y_4 X,\ \ \lb YB = y_4 Y,$$
$$\lb ZA = x_5 X+y_5 Y,\ \ \lb ZB = x_6 X+y_6 Y,$$
$$\alpha\lb AB = \alpha r A -x_5(r+y_4) X+\alpha y_5(r-y_4) Y,$$
\end{example}

\begin{example}
$$[Z,X]=\alpha X,\ \ [Z,Y]=-Y,$$
$${(1-\alpha)}\lb XB = {r\alpha} X,\ \ {(1-\alpha)}\lb YB = -{r} Y,$$
$$\lb ZA = (1-\alpha) A+x_5 X+y_5 Y,$$
$$\lb ZB = (1-\alpha) B+x_6 X+y_6 Y+r Z,$$
$$(1-\alpha)\lb AB = (1-\alpha)r A + rx_5 X + ry_5 Y.$$
\end{example}

\begin{example}
$$[Z,X]=\alpha X,\ \ [Z,Y]=-Y,$$
$$\lb YA = x_3 X,\ \ \lb YB = x_4 X,$$
$$\lb ZA = (1+\alpha) A+x_5 X+y_5 Y,$$
$$\lb ZB = (1+\alpha) B+x_6 X+y_6 Y,$$
$$(2+\alpha)\lb AB = (x_3y_6-x_4y_5) X.$$
\end{example}

\begin{example}
$$[Z,X]=\alpha X,\ \ [Z,Y]=-Y,$$
$$\lb XA = y_1 Y,\ \ \lb XB = y_2 Y,$$
$$\lb ZA = -(1+\alpha) A+x_5 X+y_5 Y,$$
$$\lb ZB = -(1+\alpha) B+x_6 X+y_6 Y,$$
$$(1+2\alpha)\lb AB = (y_2x_5-y_1x_6) Y.$$
\end{example}

\begin{example}
$$[Z,X]=\alpha X,\ \ [Z,Y]=-Y,$$
$$\lb YB = x_4 X,$$
$$\lb ZA = (1+\alpha) A+x_5 X+y_5 Y,$$
$$\lb ZB = (1+\alpha) B+x_6 X+y_6 Y,$$
$$(2+\alpha)\lb AB = -x_4y_5 X.$$
\end{example}

\begin{example}
$$[Z,X]=\alpha X,\ \ [Z,Y]=-Y,$$
$$\lb XB = y_2 Y,$$
$$\lb ZA = -(1+\alpha) A+x_5 X+y_5 Y,$$
$$\lb ZB = -(1+\alpha) B+x_6 X+y_6 Y,$$
$$(1+2\alpha)\lb AB = y_2x_5 Y.$$
\end{example}

\begin{example}
$$[Z,X]=\alpha X,\ \ [Z,Y]=-Y,$$
$$\lb ZA =  a_3 A+b_3 B+x_5 X+y_5 Y,$$
$$\lb ZB = -b_3 A+a_3 B+x_6 X+y_6 Y.$$
\end{example}

\begin{example}
$$[Z,X]=\alpha X,\ \ [Z,Y]=-Y,$$
$$2\lb ZA = -A+2b_3B+2x_5 X+2y_5 Y,$$
$$2\lb ZB = -2b_3A-B+2x_6 X+2y_6 Y,$$
$$\lb AB = \theta_2 Y.$$
\end{example}

\begin{example}\label{exam:8.10}
$$[Z,X]=\alpha X,\ \ [Z,Y]=-Y,$$
$$2\lb ZA =  \alpha A+2b_3 B+2x_5 X+2y_5 Y,$$
$$2\lb ZB = -2b_3 A+\alpha B+2x_6 X+2y_6 Y,$$
$$\lb AB = \theta_1 X.$$
\end{example}

\section{The Lie algebra  $\h^3$  of  Type V}

Let the Lie algebra $\k=\h^3$ be equipped with its standard metric.
Then its bracket relations are given by $$[Z,X]= X,\ \ [Z,Y]= Y,$$
where $\{X,Y,Z\}$ is an orthonormal basis for $\k$.

\begin{theorem}\label{theo-h3}
Let $G$ be a 5-dimensional Riemannian Lie group carrying a left-invariant,
minimal and conformal distribution $\V$, generated by the Riemannian
subalgebra $\h^3$ as above.  Then the Lie algebra $\g$ of G is
one of the following nine given in Examples \ref{exam:9.1} - \ref{exam:9.9}.
\end{theorem}

The reader should note that the following Lie algebras are all solvable.
Further, in the generic case, they are neither nilpotent nor are the
corresponding foliations $\F$ totally geodesic.

\begin{example}\label{exam:9.1}
$$[Z,X]=X,\ \ [Z,Y]=Y,$$
$$\lb XA = -y_3 X+y_1 Y,\
\ 2y_3\lb XB = -2y_3y_4 X+y_1(2y_4-r) Y,$$
$$y_1\lb YA = -y_3^2 X+y_3 Y,\
\ 2y_1\lb YB = -y_3(2y_4+r) X+2y_1y_4 Y,$$
$$\lb ZA = x_5 X+y_5 Y,$$
$$\lb ZB = x_6 X+y_6 Y,$$
$$\lb AB = r A+\theta_1 X+\theta_2 Y.$$
$$-2y_1\theta_1=2rx_5y_1+ry_3y_5+2x_5y_1y_4-2x_6y_1y_3-2y_3^2y_6+2y_3y_4y_5$$
$$-2y_3\theta_2=rx_5y_1+2ry_3y_5-2x_5y_1y_4+2x_6y_1y_3+2y_3^2y_6-2y_3y_4y_5$$
\end{example}

\begin{example}
$$[Z,X]= X,\ \ [Z,Y]= Y,$$
$$\lb XA = -y_3 X+y_1 Y,\ \ y_3\lb XB = -y_3y_4 X+y_1y_4 Y,$$
$$\lb YA =  x_3 X+y_3 Y,\ \ y_3\lb YB =  x_3y_4 X+y_3y_4 Y,$$
$$\lb ZA = x_5 X+y_5 Y,\ \ \lb ZB = x_6 X+y_6 Y,$$
$$\lb AB = \theta_1 X+\theta_2 Y,$$
$$\theta_1y_3 = x_3y_4y_5-x_3y_3y_6-x_5y_3y_4+x_6y_3^2,$$
$$\theta_2y_3 = x_5y_1y_4-x_6y_1y_3+y_3y_4y_5-y_6y_3^2.$$
\end{example}

\begin{example}
$$[Z,X]= X,\ \ [Z,Y]= Y,$$
$$\lb XB =-y_4 X+y_2 Y,\ \ \lb YB = x_4 X+y_4 Y,$$
$$\lb ZA = x_5 X+y_5 Y,\ \ \lb ZB = x_6 X+y_6 Y,$$
$$\lb AB = r A-(rx_5-x_4y_5+x_5y_4) X-(ry_5-x_5y_2-y_4y_5) Y.$$
\end{example}

\begin{example}
$$[Z,X]= X,\ \ [Z,Y]= Y,$$
$$2\lb ZA = A+ 2b_3 B+2x_5 X+2y_5 Y,$$
$$2\lb ZB = -2b_3 A+ B+2x_6 X+2y_6 Y,$$
$$\lb AB = \theta_1 X+\theta_2 Y.$$
\end{example}

\begin{example}
$$[Z,X]= X,\ \ [Z,Y]= Y,$$
$$2\lb XB = r X,$$
$$\lb YA = x_3 X,\ \ 2\lb YB = 2x_4 X- r Y,$$
$$\lb ZA = x_5 X+y_5 Y,\ \ \lb ZB = x_6 X+y_6 Y,$$
$$2\lb AB = 2r A+ (2x_4y_5-2y_6x_3-rx_5) X-3ry_5 Y.$$
\end{example}

\begin{example}
$$[Z,X]= X,\ \ [Z,Y]= Y,$$
$$\lb XA = y_1 Y,\ \ 2\lb XB = -r X+2y_2 Y,$$
$$2\lb YB = r Y,$$
$$\lb ZA = x_5 X+y_5 Y,\ \ \lb ZB = x_6 X+y_6 Y,$$
$$2\lb AB = 2r A - 3rx_5 X + (2x_5y_2-2x_6y_1-ry_5) Y.$$
\end{example}

\begin{example}
$$[Z,X]= X,\ \ [Z,Y]= Y,$$
$$x_4\lb XA = x_3y_2 Y,\ \ \lb XB = y_2 Y,$$
$$\lb YA = x_3 X,\ \ \lb YB = x_4 X,$$
$$\lb ZA = x_5 X+y_5 Y,\ \ \lb ZB = x_6 X+y_6 Y,$$
$$\lb AB = (x_4y_5-x_3y_6) X + (x_5y_2-x_6y_1) Y.$$
\end{example}

\begin{example}
$$[Z,X]= X,\ \ [Z,Y]= Y,$$
$$\lb ZA =  a_3 A+b_3 B+x_5 X+y_5 Y,$$
$$\lb ZB = -b_3 A+a_3 B+x_6 X+y_6 Y,$$
\end{example}

\begin{example}\label{exam:9.9}
$$[Z,X]= X,\ \ [Z,Y]= Y,$$
$$2\lb XB = -r X,\ \ 2\lb YB = -r Y,$$
$$\lb ZA = -2 A+x_5 X+y_5 Y,$$
$$\lb ZB = -2 B+x_6 X+y_6 Y+r Z,$$
$$2\lb AB = 2r A-rx_5 X-ry_5 Y.$$
\end{example}

\section{The Lie algebra  $\g_4$  of  Type IV}

We equip the Lie algebra $\k=\g_4$ with its standard metric.
Then its bracket relations are given by $$[Z,X]= X,\ \ [Z,Y]=X+Y,$$
where $\{X,Y,Z\}$ is an orthonormal basis for $\k$.

\begin{theorem}\label{theo-h3}
Let $G$ be a 5-dimensional Riemannian Lie group carrying a left-invariant,
minimal and conformal distribution $\V$, generated by the Riemannian
subalgebra $\g_4$ as above.  Then the Lie algebra $\g$ of G is
one of the following five given in Examples \ref{exam:10.1} - \ref{exam:10.5}.
\end{theorem}

Note that the following Lie algebras are all solvable.
Further, in the generic case, they are neither nilpotent
nor are the corresponding foliations $\F$ totally geodesic.
In two of the cases the foliation is Riemannian.

\begin{example}\label{exam:10.1}
$$[Z,X]= X,\ \ [Z,Y]=X+Y,$$
$$\lb ZA= a_3 A+b_3 B+x_5 X+y_5 Y,$$
$$\lb ZB=-b_3 A+a_3 B+x_6 X+y_6 Y.$$
\end{example}

\begin{example}
$$[Z,X]= X,\ \ [Z,Y]=X+Y,$$
$$2\lb ZA =A+2b_3 B+2x_5 X+2y_5 Y,$$
$$2\lb ZB =-2b_3 A+ B+2x_6 X+2y_6 Y,$$
$$\lb AB =\theta_1 X.$$
\end{example}

\begin{example}
$$[Z,X]= X,\ \ [Z,Y]=X+Y,\ \ \lb YB=x_4 X,$$
$$\lb ZA=x_5 X+y_5 Y,\ \ \lb ZB=x_6 X+y_6 Y,$$
$$\lb AB= r A-(rx_5-ry_5-x_4y_5) X-ry_5 Y.$$
\end{example}

\begin{example}
$$[Z,X]= X,\ \ [Z,Y]=X+Y,$$
$$\lb YA=x_3 X,\ \ \lb YB=x_4 X,$$
$$\lb ZA=x_5 X+y_5 Y,\ \ \lb ZB=x_6 X+y_6 Y,$$
$$\lb AB=(x_4y_5-x_3y_6) X.$$
\end{example}

\begin{example}\label{exam:10.5}
$$[Z,X]= X,\ \ [Z,Y]=X+Y,$$
$$\lb XB=y_4 X,\ \ \lb YB=y_4 X+y_4 Y,$$
$$\lb ZA=-2 A+x_5 X+y_5 Y,$$
$$\lb ZB=-2 B+x_6 X+y_6 Y-2y_4 Z,$$
$$\lb AB=-2y_4 A+x_5y_4 X+y_4y_5 Y.$$
\end{example}

\vskip .5cm

\begin{remark}
The number of solutions grows rapidly with decreased dimension
of the derived algebra $[\k,\k]$.  When this dimension is either
0 or 1, as for types I-III, the solutions are so many that it is
impossible to list them all here in this short paper.
\end{remark}

\appendix

\section{The $3$-dimensional Lie algebras}\label{section-Bianchi}

At the end of the 19th century, L.~Bianchi classified the $3$-dimensional
real Lie algebras.  They fall into nine disjoint types I-IX.
Each contains a single isomorphy class except types VI and VII which
are continuous families of different classes.

\begin{example}[I]
By $\rn^3$ we denote both the 3-dimensional abelian Lie algebra
and the corresponding simply connected Lie group.
\end{example}

\begin{example}[II]
The nilpotent Heisenberg algebra $\mathfrak{nil}^3$ is defined by
$$[Z,Y]=X.$$ The corresponding simply connected Lie group is
$\text{Nil}^3$.
\end{example}

\begin{example}[III]
The solvable Lie algebra $\h^2\oplus\rn$ is given by $$[Z,X]=X.$$
The corresponding simply connected Lie group is the product
$H^2\times\rn$ of the real line and the group $H^2$ classically
modelling the standard hyperbolic plane.
\end{example}

\begin{example}[IV]
By $\g_4$ we denote the solvable Lie algebra given by
$$[Z,X]=X,\quad [Z,Y]=X+Y.$$
The corresponding simply connected Lie group is $G_4$.
\end{example}

\begin{example}[V]
The solvable Lie algebra $\h^3$ is defined by $$[Z,X]=X,\quad [Z,Y]=Y.$$
The corresponding simply connected group $H^3$ classically models the
standard hyperbolic 3-space.
\end{example}

\begin{example}[VI]
For $\alpha\in\rn^+$, the solvable Lie algebra $\sol_\alpha^3$ is given by
$$[Z,X]=\alpha X,\quad [Z,Y]=-Y.$$
The corresponding simply connected Lie group is denoted by
$\text{Sol}_\alpha^3$.
\end{example}

\begin{example}[VII]
For $\alpha\in\rn$, the solvable Lie algebra $\g_7(\alpha)$ is defined
by $$[Z,X]=\alpha X-Y,\quad [Z,Y]=X+\alpha Y.$$ We denote the
corresponding simply connected Lie group by $G_7(\alpha)$.
\end{example}

\begin{example}[VIII]
The simple Lie algebra $\slr 2$ satisfies
$$[X,Y]=2Z,\quad [Z,X]=2Y,\quad [Y,Z]=-2X.$$
The corresponding simply connected Lie group is denoted by
$\widetilde{\SLR 2}$ as it is the universal cover of the special
linear group $\SLR 2$.
\end{example}

\begin{example}[IX]
The simple Lie algebra $\su 2$ satisfies
$$[X,Y]=2Z,\quad [Z,X]=2Y,\quad [Y,Z]=2X.$$
The corresponding simply connected Lie group is $\SU 2$
diffeomorphic to the standard 3-dimensional sphere.
\end{example}

\section{Acknowledgements}
The author is grateful to the referee for useful comments on this paper.

\end{document}